\newtheorem{thm}{Theorem}[]
\newtheorem{lem}[thm]{Lemma}
\newtheorem{prop}[thm]{Proposition}
\newtheorem{exam}[thm]{Example}
\newcommand{\brelr}[1]{\left\{#1\right\}}
\newcommand{\n}[1]{\vert#1\vert}
\newcommand{\ngg}[1]{\bigl\vert#1\bigr\vert}
\newcommand{\nl}[1]{\biggl\vert#1\biggr\vert}
\newcommand{\nll}[1]{\Biggl\vert#1\Biggr\vert}
\newcommand{\nlr}[1]{\left\vert#1\right\vert}
\newcommand{\C}{\mathbf{C}}
\newcommand{\N}{\mathbf{N}}
\begin{document}

\title[Borel lemma: geometric progression and zeta-functions]
{\bf Borel lemma: geometric progression and zeta-functions}

\author[Qi Han, Jingbo Liu, and Nadeem Malik]
{Qi Han, Jingbo Liu, and Nadeem Malik}

\address{\rm Department of Computational, Engineering, and Mathematical Sciences
\vskip 2pt Texas A\&M University-San Antonio, San Antonio, Texas 78224, USA
\vskip 2pt Email: {\sf qhan@tamusa.edu}, {\sf jliu@tamusa.edu}, and {\sf nmalik@tamusa.edu}}

\let\origmaketitle\maketitle
\def\maketitle{
  \begingroup
  \def\uppercasenonmath##1{} % this disables uppercasing title
  \let\MakeUppercase\relax % this disables uppercasing authors
  \origmaketitle
  \endgroup}

\thanks{{\sf 2020 Mathematics Subject Classification.} 11M06, 11M35, 26A12, 30D35.}
\thanks{{\sf Keywords.} Borel lemma, exceptional set, geometric progression, the Riemann and Hurwitz zeta-functions.}

%\date{}

%\dedicatory{}

%\commby{}

% -----------------------------------------------------------------------------

\begin{abstract}
In the proof of the classical Borel lemma \cite{eB} by Hayman \cite{wkH}, each continuous increasing function $T(r)\geq1$ satisfies $T\bigl(r+\frac{1}{T(r)}\bigr)<2T(r)$ outside a possible exceptional set of linear measure $2$.
We note in this work $T(r)$ satisfies a sharper inequality $T\bigl(r+\frac{1}{T(r)}\bigr)<\bigl(\sqrt{T(r)}+1\bigr)^2\leq2T(r)$, if $T(r)\geq\bigl(\sqrt{2}+1\bigr)^2$, outside a possible exceptional set of linear measure $\zeta\bigl(2,\sqrt{2}+1\bigr)\leq0.52<2$ for the Hurwitz zeta-function $\zeta(s,a)$.
This result is worth noting, provided the set of $r$ in which $1\leq T(r)<\bigl(\sqrt{2}+1\bigr)^2$ has linear measure less than $1.48$.
Focusing exclusively on meromorphic functions of infinite order, we utilize Hinkkanen's Second Main Theorem \cite{aH}, draw comparisons with Borel \cite{eB}, Nevanlinna \cite{rN}, and Hayman \cite{wkH}, and finally generalize Fern\'{a}ndez \'{A}rias \cite{aFA1}.
\end{abstract}

% -----------------------------------------------------------------------------
\maketitle
% -----------------------------------------------------------------------------
%%%%%%%%%%%%%%%%%%%%%%%%%%%%%%%%%%%%%%%%%%%%%%%%%%%%%%%%%%%%%%%%%%%%%%%%%%%%%%%%%%%%%%%%%%%%%%%%%%%%%%%%%%%%%%%%%%%%%%%%%%%%%%%%%%%%%%%%%%%%%%%%%%%
%%%%%%%%%%%%%%%%%%%%%%%%%%%%%%%%%%%%%%%%%%%%%%%%%%%%%%%%%%%%%%%%%%%%%%%%%%%%%%%%%%%%%%%%%%%%%%%%%%%%%%%%%%%%%%%%%%%%%%%%%%%%%%%%%%%%%%%%%%%%%%%%%%%
%%%%%%Materials above%%%%%%
%%%%%%Section 1 below%%%%%%
%%%%%%%%%%%%%%%%%%%%%%%%%%%%%%%%%%%%%%%%%%%%%%%%%%%%%%%%%%%%%%%%%%%%%%%%%%%%%%%%%%%%%%%%%%%%%%%%%%%%%%%%%%%%%%%%%%%%%%%%%%%%%%%%%%%%%%%%%%%%%%%%%%%
%%%%%%%%%%%%%%%%%%%%%%%%%%%%%%%%%%%%%%%%%%%%%%%%%%%%%%%%%%%%%%%%%%%%%%%%%%%%%%%%%%%%%%%%%%%%%%%%%%%%%%%%%%%%%%%%%%%%%%%%%%%%%%%%%%%%%%%%%%%%%%%%%%%
\section{\sl Introduction}\label{Int} % use lowercase except for proper names
\noindent
The value distribution of a polynomial $p(z)$ in $\C$ is very neat: the Fundamental Theorem of Algebra states that the total number of zeros of $p(z)$, counting multiplicities, equals its degree. Note that $p(z)-c$ is again a polynomial of the same degree for all finite numbers $c$.
Picard generalized this result to entire functions $h(z)$ in $\C$, showing that each $h(z)$ attains all finite values $c$ infinitely often, with at most one exception.
Borel \cite{eB} appears the first to note subtleties regarding the nature of these infinities, and to establish a connection between the growth rate of the maximum modulus of $h(z)$ and the asymptotic frequency with which $h(z)$ attains all finite values $c$, except for at most one.
Building on this, Lindel\"{o}f \cite{elL} employed Jensen's formula in his study of entire functions—a stream of research that markedly influenced Rolf Nevanlinna's development of his value distribution theory for meromorphic functions in $\C$ in the 1920s.

Nevanlinna theory centers around two fundamental results: the First and Second Main Theorems.
While the first result is a novel restatement of the classical Poisson–Jensen formula, the centerpiece of the theory is the second result—a far-reaching, profound quantitative refinement of Picard's theorem on meromorphic functions, which states that each meromorphic function $f(z)$ in $\C$ attains all values—including $\infty$—infinitely often, with at most two exceptions.
Hermann Weyl in 1943 stressed in one of the classical monographs that Nevanlinna theory was ``{\sl one of the few great mathematical events in our century}.''
A fundamental tool in proving the Second Main Theorem is the lemma on the logarithmic derivative (see Han and Liu \cite{HL} and references therein for details), which depends critically on the Poisson–Jensen formula and the Borel lemma.

Interested readers may find Lehto's exposition \cite{oL} particularly enjoyable.
Several classical treatises on Nevanlinna theory include Hayman \cite{wkH}, Gol'dberg and Ostrovski\v{\i} \cite{GO}, and Cherry and Ye \cite{CY}, each offering a distinctive perspective and unique strengths in presenting this elegant theory.

In the sequel, we assume that $f(z)$ is a meromorphic function in $\C$, and define
\begin{equation*}
\begin{split}
m(r,f;\infty)&:=\frac{1}{2\pi}\int_0^{2\pi}\log^+{\ngg{f(re^{i\theta})}}d\theta,\\
m(r,f;c)&:=\frac{1}{2\pi}\int_0^{2\pi}\log^+\frac{1}{\ngg{f(re^{i\theta})-c}}d\theta,\,\,\forall\,\,c\in\C,\\
N(r,f;c)&:=\int_0^r\frac{n(t,f;c)-n(0,f;c)}{t}dt+n(0,f;c)\log r,\,\,\forall\,\,c\in\C\cup\{\infty\},
\end{split}
\end{equation*}
where $n(t,f;c)$ denotes the number of zeros of $f(z)-c$ if $c\in\C$, or the number of poles of $f(z)$ if $c=\infty$, in the Euclidian disk $\n{z}\leq t$.

Nevanlinna's First Main Theorem states that, given the characteristic function
\begin{equation}\label{Equ1}
T(r,f):=m(r,f;\infty)+N(r,f;\infty),
\end{equation}
we have, for each finite value $c\in\C$,
\begin{equation}\label{Equ2}
T(r,f)=m(r,f;c)+N(r,f;c)+\varepsilon(r,c).
\end{equation}
Here, the term $\varepsilon(r,c)$ relies upon $c$ and $r$, and satisfies $\n{\varepsilon(r,c)}=O(1)$ as $r\to\infty$; see, for example, Chapter 1, Theorem 4.1 in Gol'dberg and Ostrovski\v{\i} \cite{GO}.

Moreover, Nevanlinna's Second Main Theorem, presented in its most succinct form to date by Hinkkanen \cite{aH}, states that for $b:=1+\max\limits_{1\leq j\leq q}\{\n{c_j}\}$,
\begin{equation}\label{Equ3}
\begin{split}
&(q-1)T(r,f)-\sum_{j=1}^qN(r,f;c_j)-N(r,f;\infty)+N_{\rm{ram}}(r,f)\\
\leq\,&\frac{1}{2\pi}\Biggl(\int_{\tt{U}}\log^+\nll{\sum_{j=1}^q\frac{f'(re^{i\theta})}{f(re^{i\theta})-c_j}}d\theta+
\int_{\tt{V}}\log^+\nl{\frac{f'(re^{i\theta})}{f(re^{i\theta})-b}}d\theta\Biggr)+K_1\\
\leq\,&\log^+\biggl(\frac{T(R,f)}{R-r}\frac{R}{r}\biggr)+K_2,\,\,\forall\,\,r_0\leq r<R<\infty.
\end{split}
\end{equation}
Here, $c_j\neq c_{j'}\in\C$ for $1\leq j\neq j'\leq q$, $N_{\rm{ram}}(r,f)\geq0$ is the ramification term, ${\tt{U}}$ is the subset of $\theta\in[0,2\pi)$ with $\nl{\sum\limits_{j=1}^q\frac{1}{f(re^{i\theta})-c_j}}>2b+2$ and $f(re^{i\theta})\in\hat{U}$ for the union $\hat{U}$ of the Euclidean disks with centers $c_j$ and radius $\delta\leq\frac{\min\limits_{1\leq j\neq j'\leq q}\brelr{1,\hspace{0.2mm}\nlr{c_j-c_{j'}}}}{3}$, ${\tt{V}}:=[0,2\pi)\setminus{\tt{U}}$, and $K_1,K_2$ and $r_0$ are positive constants.

This sharp formulation of the Second Main Theorem by Hinkkanen was developed partly in response to a question raised by Lang regarding the best possible upper bound for \eqref{Equ3}: Lang \cite{sL} recognized earlier discoveries of Osgood \cite{cfO} and Vojta \cite{pV} on the profound connections between Nevanlinna theory and number theory.
Important contributions to this line of research were also made by Wong \cite{p-mW}, Ye \cite{zY}, and Fern\'{a}ndez \'{A}rias \cite{aFA2}, among others.

Hinkkanen derived an optimal upper bound of the form $\log^+\bigl(\frac{\varphi(T(r,f))}{\mu(r)}\bigr)+O(1)$ for \eqref{Equ3} through increasing functions $\varphi(r),\mu(r)>0$, such that $\int_1^\infty\frac{1}{\varphi(r)}dr<\infty$ and $\int_1^\infty\frac{1}{\mu(r)}dr=\infty$, outside a possible exceptional set of finite $\mu$-measure governed by $\varphi$, revealing an interplay between the magnitude of the upper bound for \eqref{Equ3} and the size of the exceptional set: for instance, when $\mu(r)\equiv1$, a larger $\varphi(r)$ yields a larger upper bound $\log^+\varphi(T(r,f))$, but simultaneously results in a smaller value of $\int_1^\infty\frac{1}{\varphi(r)}dr$, thereby reducing the size of the exceptional set.
Hinkkanen utilized Hayman's version of the Borel lemma (see \cite[Lemma 2.4]{wkH}) by connecting scalar-multiple functions and associated geometric progressions; for additional context, one may also consult Cherry and Ye \cite[Section 3.3]{CY}.

The term
\begin{equation}\label{Equ4}
\log^+\biggl(\frac{T(R,f)}{R-r}\frac{R}{r}\biggr)
\end{equation}
in \eqref{Equ3}, which appears in Hinkkanen \cite[Lemma 3]{aH}, originates from the logarithmic derivative lemma by Gol'dberg and Grin\v{s}te\v{\i}n \cite{GG}.
For its most recent development involving refined, though likely still non-sharp, constants, see Benbourenane and Korhonen \cite{BK}, or Kondratyuk and Kshanovskyy \cite{KK}.
To ensure \eqref{Equ4} be $o(T(r,f))$, the Borel lemma is indispensable, and exceptional sets inevitably appear; refer also to {\sf Remark 1} below.
We will examine this through the Riemann and Hurwitz zeta-functions, aiming to simultaneously reduce both the magnitude of the upper bound for \eqref{Equ4} and the size of the associated exceptional set.

\vskip 8pt
\noindent\fbox{{\sf Remark 1}.} It is well known that when $f(z)$ has finite order, \eqref{Equ4} is $o(T(r,f))$ with no exceptional set, where the order of $f(z)$ is defined to be
\begin{equation}\label{Equ5}
\rho(f):=\limsup_{r\to\infty}\frac{\log T(r,f)}{\log r}.
\end{equation}
Consequently, our subsequent analysis will be confined to meromorphic functions $f(z)$ in $\C$ of infinite order.

%%%%%%%%%%%%%%%%%%%%%%%%%%%%%%%%%%%%%%%%%%%%%%%%%%%%%%%%%%%%%%%%%%%%%%%%%%%%%%%%%%%%%%%%%%%%%%%%%%%%%%%%%%%%%%%%%%%%%%%%%%%%%%%%%%%%%%%%%%%%%%%%%%%
%%%%%%%%%%%%%%%%%%%%%%%%%%%%%%%%%%%%%%%%%%%%%%%%%%%%%%%%%%%%%%%%%%%%%%%%%%%%%%%%%%%%%%%%%%%%%%%%%%%%%%%%%%%%%%%%%%%%%%%%%%%%%%%%%%%%%%%%%%%%%%%%%%%
%%%%%%Section 1 above%%%%%%
%%%%%%Section 2 below%%%%%%
%%%%%%%%%%%%%%%%%%%%%%%%%%%%%%%%%%%%%%%%%%%%%%%%%%%%%%%%%%%%%%%%%%%%%%%%%%%%%%%%%%%%%%%%%%%%%%%%%%%%%%%%%%%%%%%%%%%%%%%%%%%%%%%%%%%%%%%%%%%%%%%%%%%
%%%%%%%%%%%%%%%%%%%%%%%%%%%%%%%%%%%%%%%%%%%%%%%%%%%%%%%%%%%%%%%%%%%%%%%%%%%%%%%%%%%%%%%%%%%%%%%%%%%%%%%%%%%%%%%%%%%%%%%%%%%%%%%%%%%%%%%%%%%%%%%%%%%
\section{\sl Borel lemma: geometric progression and zeta-functions}\label{BL:gp vs. zeta} % use lowercase except for proper names
\noindent
We begin this section with an observation concerning the geometric progression
\begin{equation*}
\gamma(s)=\frac{s}{s-1}=\sum_{n=0}^{\infty}\frac{1}{s^n}=1+\frac{1}{s}+\frac{1}{s^2}+\frac{1}{s^3}+\frac{1}{s^4}+\cdots,
\end{equation*}
and the Riemann zeta-function
\begin{equation*}
\zeta(s)=\sum_{n=1}^{\infty}\frac{1}{n^s}=1+\frac{1}{2^s}+\frac{1}{3^s}+\frac{1}{4^s}+\frac{1}{5^s}+\frac{1}{6^s}+\cdots,
\end{equation*}
both restricted to the interval $s\in(1,\infty)$, following Titchmarsh \cite[Section 2.1]{ecT}.

\begin{prop}\label{Titchmarsh}
The inequality $\zeta(s)<\gamma(s)$ holds uniformly for $s\in(1,\infty)$.
\end{prop}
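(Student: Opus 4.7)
The plan is a direct integral comparison, writing $g(s)-\zeta(s)$ as a visibly positive quantity for every $s>1$. Split off the constant term $1$ from each series so that the claim reduces to showing
\begin{equation*}
\sum_{n=2}^\infty\frac{1}{n^s}<\frac{1}{s-1}\,\,\,\,\,\forall\,s\in(1,\infty),
\end{equation*}
since $g(s)-1=\sum_{n=1}^\infty s^{-n}=\frac{1}{s-1}$ telescopes to the right-hand side, and $\zeta(s)-1=\sum_{n=2}^\infty n^{-s}$.

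The key step is to exploit that for every fixed $s>1$ the function $x\mapsto x^{-s}$ is strictly decreasing on $[1,\infty)$. Hence for each integer $n\geq2$ one has the strict inequality
\begin{equation*}
\frac{1}{n^s}<\int_{n-1}^{n}\frac{dx}{x^s}.
\end{equation*}
Summing over $n\geq 2$ (both sides converge because $s>1$), the resulting series on the right telescopes to $\int_1^\infty x^{-s}\,dx=\frac{1}{s-1}$. The strict inequality is preserved under summation since the gap $\int_{n-1}^n x^{-s}dx-n^{-s}$ is strictly positive even for a single $n$; thus
\begin{equation*}
\sum_{n=2}^\infty\frac{1}{n^s}<\sum_{n=2}^\infty\int_{n-1}^{n}\frac{dx}{x^s}=\frac{1}{s-1},
\end{equation*}
which yields $\zeta(s)<g(s)$ on $(1,\infty)$, as desired. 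This is exactly the integral-test estimate recorded in Titchmarsh \cite{ecT}.

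There is no genuine obstacle here; the only point requiring any care is that the inequality is strict for every individual $s\in(1,\infty)$ (the word \emph{uniformly} in the statement is to be understood as \emph{pointwise everywhere}, since $g(s)-\zeta(s)\to 0$ as $s\to\infty$ so no uniform positive lower bound on the gap is claimed). Strictness follows because $x^{-s}>n^{-s}$ on the open interval $(n-1,n)$, making each term in the tail-comparison strictly positive.
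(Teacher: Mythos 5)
Your proof is correct. The paper's proof simply quotes Titchmarsh's equation (2.1.4), $\zeta(s) = s\int_1^\infty\frac{[t]-t}{t^{s+1}}\,dt + \frac{s}{s-1}$, and reads off $g(s)-\zeta(s)=s\int_1^\infty\frac{t-[t]}{t^{s+1}}\,dt>0$. You instead re-derive the inequality from scratch via the elementary integral test: separate the $n=1$ term from both series, write $g(s)-1=\frac{1}{s-1}=\int_1^\infty x^{-s}\,dx$, and dominate each $n^{-s}$ ($n\ge2$) strictly by $\int_{n-1}^n x^{-s}\,dx$ using monotonicity. These two arguments are in fact the same positive quantity in disguise: one integration by parts on each piece gives
\begin{equation*}
\int_{n-1}^{n}x^{-s}\,dx-\frac{1}{n^{s}}=s\int_{n-1}^{n}\frac{t-(n-1)}{t^{s+1}}\,dt,
\end{equation*}
and summing over $n\ge 2$ reproduces $s\int_1^\infty\frac{t-[t]}{t^{s+1}}\,dt$. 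So the content is identical; what your version buys is self-containedness (no appeal to a formula whose derivation is nontrivial), at the cost of being slightly longer. Your reading of ``uniformly'' as ``at every point of $(1,\infty)$'' is also the right one, since $g(s)-\zeta(s)\to 0$ as $s\to\infty$ and no uniform positive gap is being claimed.
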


\begin{proof}
Equation (2.1.4) of Titchmarsh \cite{ecT} may be slightly rewritten as
\begin{equation*}
\begin{split}
\zeta(s)\,&=s\int_1^\infty\frac{[t]-t}{t^{s+1}}dt+\frac{s}{2}\int_1^\infty\frac{1}{t^{s+1}}dt+\frac{1}{2}+\frac{1}{s-1}\\
\,&=s\int_1^\infty\frac{[t]-t}{t^{s+1}}dt+1+\frac{1}{s-1}=s\int_1^\infty\frac{[t]-t}{t^{s+1}}dt+\frac{s}{s-1},
\end{split}
\end{equation*}
which can be equivalently reformulated as
\begin{equation}\label{Equ6}
\gamma(s)-\zeta(s)=s\int_1^\infty\frac{t-[t]}{t^{s+1}}dt>0,
\end{equation}
uniformly for $s\in(1,\infty)$, with $[t]$, as usual, the greatest integer part of $t$.
\end{proof}

If we treat $\zeta(1)$ as a {\sl logarithmic infinity} given by $\lim\limits_{n\to\infty}H_n=\lim\limits_{n\to\infty}\sum\limits^n_{k=1}\frac{1}{k}$ for the $n$-th Harmonic number $H_n$ and $\gamma(1)$ as a {\sl linear infinity} given by $\lim\limits_{n\to\infty}\sum\limits^n_{k=1}1$, then $\zeta(s)<\gamma(s)$ uniformly for $s\in[1,\infty)$ with $\lim\limits_{s\to\infty}\gamma(s)=\lim\limits_{s\to\infty}\zeta(s)=1$.

\vskip 8pt
\noindent\fbox{{\sf Remark 2}.} Below, we rederive several renowned forms of the Borel lemma—due to Borel \cite[pp. 374–376]{eB}, Nevanlinna \cite{rN}, and Hayman \cite[Lemma 2.4]{wkH}—deliberately adopting a unified notation to enable clear and direct comparison.
While these reproofs are not entirely new, to the best of our knowledge, this is the first time that they have been presented with a focus on the role of the parameter $s$ in assessing the size of the corresponding exceptional sets.

\begin{thm} {\rm(Borel)}\label{Borel}
Each continuous increasing function $T(r)\geq e$ satisfies
\begin{equation}\label{Equ7}
T\biggl(r+\frac{1}{\log^+T(r)}\biggr)<T^s(r),\,\,r\geq r_0\,\,\text{and}\,\,s>1,
\end{equation}
outside a possible exceptional set of linear measure $\gamma(s)$.
For $T(r)=T(r,f)$ and $R=r+\frac{1}{\log^+T(r,f)}$, this yields the following upper bound for \eqref{Equ4} as
\begin{equation}\label{Equ8}
s\log^+T(r,f)+\log^+\log^+T(r,f)+\log\biggl(1+\frac{1}{r\log^+T(r,f)}\biggr).
\end{equation}
\end{thm}

\begin{proof}
We only sketch the proof following Chapter 3, Theorem 1.2 in Gol'dberg and Ostrovski\v{\i} \cite{GO}.
Assume $T(r)\geq T(r_0)\geq e$ so that $\log T(r_0)\geq1$.
Denote by $E_{\bf{B}}$ the closed subset of $[r_0,\infty)$ in which the inequality \eqref{Equ7} is reversed.

Set $r_1:=\min\limits_{r\geq r_0}\{r\in E_{\bf{B}}\}$, and let $r'_1$ be the least $r$ such that $T(r)=T^s(r_1)$ with $r'_1>r_1$; on the other hand, $T\bigl(r_1+\frac{1}{\log T(r_1)}\bigr)\geq T^s(r_1)$ by definition, and therefore $r_1+\frac{1}{\log T(r_1)}\geq r'_1$ with $r'_1-r_1\leq\frac{1}{\log T(r_1)}$.
Next, set $r_2:=\min\limits_{r\geq r'_1}\{r\in E_{\bf{B}}\}$ and find $r'_2>r_2$ analogously.
Inductively, one derives
\begin{equation*}
r_0\leq r_1<r'_1\leq r_2<r'_2\leq\cdots\leq r_n<r'_n\leq\cdots
\end{equation*}
such that
\begin{equation*}
\begin{split}
r'_1-r_1\leq\,&\frac{1}{\log T(r_1)}\leq\frac{1}{\log T(r_0)}\leq1,\ldots,\\
r'_{n+1}-r_{n+1}\leq\,&\frac{1}{\log T(r_{n+1})}\leq\frac{1}{\log T(r'_n)}\leq\frac{1}{s\log T(r_n)}\\
&\leq\cdots\leq\frac{1}{s^n\log T(r_1)}\leq\frac{1}{s^n\log T(r_0)}\leq\frac{1}{s^n},\ldots.
\end{split}
\end{equation*}
Consequently, $E_{\bf{B}}\subseteq\bigcup^\infty_{j=1}[r_j,r'_j]$, and accordingly $\n{E_{\bf{B}}}\leq\gamma(s)$.
The estimate \eqref{Equ8} follows from a routine computation.
\end{proof}

\begin{thm} {\rm(Nevanlinna)}\label{Nevanlinna}
Each continuous increasing function $T(r)\geq1$ satisfies
\begin{equation}\label{Equ9}
T\biggl(r+\frac{1}{T^s(r)}\biggr)<T(r)+1,\,\,r\geq r_0\,\,\text{and}\,\,s>1,
\end{equation}
outside a possible exceptional set of linear measure $\zeta(s)$.
For $T(r)=T(r,f)$ and $R=r+\frac{1}{T^s(r,f)}$, this yields the following upper bound for \eqref{Equ4} as
\begin{equation}\label{Equ10}
(s+1)\log^+T(r,f)+\log\biggl(1+\frac{1}{T(r,f)}\biggr)+\log\biggl(1+\frac{1}{rT^s(r,f)}\biggr).
\end{equation}
\end{thm}

\begin{proof}
We only sketch the proof following Chapter 3, Theorem 1.2 in Gol'dberg and Ostrovski\v{\i} \cite{GO}, replacing the integrable function $\varphi(r)$ by $\frac{1}{r^s}\,(s>1)$ as well as assuming $T(r)\geq T(r_0)\geq1$.
Denote by $E_{\bf{N}}$ the closed subset of $[r_0,\infty)$ in which the inequality \eqref{Equ9} is reversed.
Likewise, one inductively has
\begin{equation*}
r_0\leq r_1<r'_1\leq r_2<r'_2\leq\cdots\leq r_n<r'_n\leq\cdots
\end{equation*}
such that
\begin{equation*}
\begin{split}
r'_1-r_1\leq\,&\frac{1}{T^s(r_1)}\leq\frac{1}{T^s(r_0)}\leq1,\ldots,\\
r'_n-r_n\leq\,&\frac{1}{T^s(r_n)}\leq\frac{1}{T^s(r'_{n-1})}\leq\frac{1}{{(T(r_{n-1})+1)}^s}\\
&\leq\cdots\leq\frac{1}{{(T(r_1)+n-1)}^s}\leq\frac{1}{{(T(r_0)+n-1)}^s}\leq\frac{1}{n^s},\ldots.
\end{split}
\end{equation*}
As a result, $E_{\bf{N}}\subseteq\bigcup^\infty_{j=1}[r_j,r'_j]$, and accordingly $\n{E_{\bf{N}}}\leq\zeta(s)$.
The estimate \eqref{Equ10} follows from a routine computation.
\end{proof}

It may be worth noting that Nevanlinna \cite{rN} did not explicitly use (or perhaps even consider) $\zeta(s)$, but instead worked with an integral expression that leads to $1+\int_1^\infty\frac{1}{r^s}dr=1+\frac{1}{s-1}=\gamma(s)$ using $\frac{1}{r^s}$ for $s>1$.

\begin{thm} {\rm(Hayman)}\label{Hayman}
Each continuous increasing function $T(r)\geq1$ satisfies
\begin{equation}\label{Equ11}
T\biggl(r+\frac{1}{T(r)}\biggr)<sT(r),\,\,r\geq r_0\,\,\text{and}\,\,s>1,
\end{equation}
outside a possible exceptional set of linear measure $\gamma(s)$.
For $T(r)=T(r,f)$ and $R=r+\frac{1}{T(r,f)}$, this yields the following upper bound for \eqref{Equ4} as
\begin{equation}\label{Equ12}
2\log^+T(r,f)+\log s+\log\biggl(1+\frac{1}{rT(r,f)}\biggr).
\end{equation}
\end{thm}

\begin{proof}
We only sketch the proof following Lemma 2.4 in Hayman \cite{wkH}, assuming $T(r)\geq T(r_0)\geq1$ as before.
Denote by $E_{\bf{H}}$ the closed subset of $[r_0,\infty)$ in which the inequality \eqref{Equ11} is reversed.
 Likewise, one inductively has
\begin{equation*}
r_0\leq r_1<r'_1\leq r_2<r'_2\leq\cdots\leq r_n<r'_n\leq\cdots
\end{equation*}
such that
\begin{equation*}
\begin{split}
r'_1-r_1\leq\,&\frac{1}{T(r_1)}\leq\frac{1}{T(r_0)}\leq1,\ldots,\\
r'_{n+1}-r_{n+1}\leq\,&\frac{1}{T(r_{n+1})}\leq\frac{1}{T(r'_n)}\leq\frac{1}{sT(r_n)}\\
&\leq\cdots\leq\frac{1}{s^nT(r_1)}\leq\frac{1}{s^nT(r_0)}\leq\frac{1}{s^n},\ldots.
\end{split}
\end{equation*}
As a result, $E_{\bf{H}}\subseteq\bigcup^\infty_{j=1}[r_j,r'_j]$, and accordingly $\n{E_{\bf{H}}}\leq\gamma(s)$.
The estimate \eqref{Equ12} follows from a routine computation.
\end{proof}

Since we consider only meromorphic functions $f(z)$ of infinite order, the term $\log^+T(r,f)$ dominates asymptotically—for large $T(r,f)$, note $\eqref{Equ8}<\eqref{Equ12}<\eqref{Equ10}$ when $1<s<2$ whereas $\eqref{Equ12}<\eqref{Equ8}<\eqref{Equ10}$ when $s\geq2$. 
Naturally, the smallest possible exceptional set occurs in Theorem \ref{Nevanlinna} for \eqref{Equ10}, and Theorem \ref{Hayman} produces a larger exceptional set for \eqref{Equ12}, while Theorem \ref{Borel} yields the largest exceptional set for \eqref{Equ8} when incorporating the set of $r$ in which $1\leq T(r,f)<e$.

\begin{thm}\label{HanLiu1}
Each continuous increasing function $T(r)\geq1$ satisfies
\begin{equation}\label{Equ13}
T\biggl(r+\frac{1}{T(r)}\biggr)<{\bigl(T^{1/s}(r)+1\bigr)}^s,\,\,r\geq r_0\,\,\text{and}\,\,s>1,
\end{equation}
outside a possible exceptional set of linear measure $\zeta(s)$.
For $T(r)=T(r,f)$ and $R=r+\frac{1}{T(r,f)}$, this yields the following upper bound for \eqref{Equ4} as
\begin{equation}\label{Equ14}
2\log^+T(r,f)+\log\biggl(1+\frac{1}{T^{1/s}(r,f)}\biggr)^s+\log\biggl(1+\frac{1}{rT(r,f)}\biggr).
\end{equation}
\end{thm}

\begin{proof}
Assume $T(r)\geq T(r_0)\geq1$, and denote by $E$ the closed subset of $[r_0,\infty)$ in which the inequality \eqref{Equ13} is reversed.
One inductively deduces
\begin{equation*}
r_0\leq r_1<r'_1\leq r_2<r'_2\leq\cdots\leq r_n<r'_n\leq\cdots
\end{equation*}
such that
\begin{equation*}
\begin{split}
r'_1-r_1\leq\,&\frac{1}{T(r_1)}\leq\frac{1}{T(r_0)}\leq1\,\,\text{and}\,\,T(r_1)\geq T(r_0)\geq1=1^s,\\
r'_2-r_2\leq\,&\frac{1}{T(r_2)}\leq\frac{1}{T(r'_1)}\leq\frac{1}{{(T^{1/s}(r_1)+1)}^s}\leq\frac{1}{2^s}\,\,\text{and}\,\,T(r_2)\geq2^s,\ldots,\\
r'_n-r_n\leq\,&\frac{1}{T(r_n)}\leq\frac{1}{T(r'_{n-1})}\leq\frac{1}{{(T^{1/s}(r_{n-1})+1)}^s}\leq\frac{1}{n^s}\,\,\text{and}\,\,T(r_n)\geq n^s,\ldots.
\end{split}
\end{equation*}
Consequently, $E\subseteq\bigcup^\infty_{j=1}[r_j,r'_j]$, and accordingly $\n{E}\leq\zeta(s)$.
The estimate \eqref{Equ14} follows from a routine computation.
\end{proof}

As the term $\log^+T(r,f)$ dominates asymptotically, $\eqref{Equ8}<\eqref{Equ14}<\eqref{Equ12}<\eqref{Equ10}$ if $1<s<2$ and $\eqref{Equ14}<\eqref{Equ12}<\eqref{Equ8}<\eqref{Equ10}$ if $s\geq2$ for large $T(r,f)$; the smallest possible exceptional set arises in Theorem \ref{Nevanlinna} for \eqref{Equ10} and in Theorem \ref{HanLiu1} for \eqref{Equ14}.
Theorem \ref{HanLiu1}, in particular, suggests the possibility of simultaneously getting both a sharper upper bound for \eqref{Equ4} and a smaller associated exceptional set.

The final point of discussion regards the comparison between \eqref{Equ12} and \eqref{Equ14} in the case where $T(r,f)$ is not necessarily large.
Observe that
\begin{equation*}
\biggl(1+\frac{1}{T^{1/s}(r,f)}\biggr)^s\leq s\,\,\Longleftrightarrow\,\,T(r,f)\geq\Bigl(\frac{1}{s^{1/s}-1}\Bigr)^s.
\end{equation*}
For some $f(z)$ with $\rho(f)=\infty$, \eqref{Equ5} yields $T(r,f)\gg r^\omega,\,\,\forall\,\,\omega>0$.
So, given $s\geq2$ and those $f(z)$, \eqref{Equ14} is sharper than \eqref{Equ12} whenever $r>\bigl(\frac{1}{s^{1/s}-1}\bigr)^{s/\omega}\approx1$.

\vskip 8pt
\noindent\fbox{{\sf Remark 3}.} It is important to realize that in \eqref{Equ12} and \eqref{Equ14}, the dominant term is $2\log^+T(r,f)$, which, unlike in \eqref{Equ8} and \eqref{Equ10}, in independent of $s$.
It is reasonable to speculate Hayman originally derived \eqref{Equ12} for $s=2$, a result we generalized to all $s>1$ in Theorem \ref{Hayman}, with the aim of enhancing Borel and Nevanlinna's works.
In this light, Theorem \ref{HanLiu1} may be seen as a further refinement of Hayman's result, and thus a potential supplement to the classical value distribution theory.

Given {\sf Remark 3} above, we provide a detailed analysis for the case $s=2$ using the Hurwitz zeta-function, which is summarized as the following example.

\begin{exam}\label{s=2}
When $s=2$, one has $\eqref{Equ14}<\eqref{Equ12}$ if $T(r,f)>T(r'_0,f)=\bigl(\sqrt{2}+1\bigr)^2$ for some fixed $r'_0\geq r_0>0$ with $T(r_0,f)\geq1$.
Let $E'$ denote the closed subset of $[r'_0,\infty)$ in which the inequality \eqref{Equ13} fails to hold.
One inductively deduces
\begin{equation*}
r'_0\leq r_1<r'_1\leq r_2<r'_2\leq\cdots\leq r_n<r'_n\leq\cdots
\end{equation*}
following the proof of Theorem \ref{HanLiu1}, by examining $E'$ instead of $E$, such that
\begin{equation*}
\begin{split}
r'_1-r_1\leq\,&\frac{1}{T(r_1,f)}\leq\frac{1}{T(r'_0,f)}\leq\frac{1}{\bigl(\sqrt{2}+1\bigr)^2}\\
&\,\,\text{and}\,\,T(r_1,f)\geq\bigl(\sqrt{2}+1\bigr)^2,\\
r'_2-r_2\leq\,&\frac{1}{T(r_2,f)}\leq\frac{1}{T(r'_1,f)}\leq\frac{1}{{(T^{1/2}(r_1,f)+1)}^2}\leq\frac{1}{\bigl(\sqrt{2}+2\bigr)^2}\\
&\,\,\text{and}\,\,T(r_2,f)\geq\bigl(\sqrt{2}+2\bigr)^2,\ldots,\\
r'_n-r_n\leq\,&\frac{1}{T(r_n,f)}\leq\frac{1}{T(r'_{n-1},f)}\leq\frac{1}{{(T^{1/2}(r_{n-1},f)+1)}^2}\leq\frac{1}{\bigl(\sqrt{2}+n\bigr)^2}\\
&\,\,\text{and}\,\,T(r_n,f)\geq\bigl(\sqrt{2}+n\bigr)^2,\ldots.
\end{split}
\end{equation*}
As a result, $E'\subseteq\bigcup^\infty_{j=1}[r_j,r'_j]$, and accordingly $\n{E'}\leq\zeta\bigl(2,\sqrt{2}+1\bigr)$.
Here,
\begin{equation*}
\zeta(s,a)=\sum_{n=0}^{\infty}\frac{1}{(n+a)^s}=\frac{1}{a^s}+\frac{1}{(a+1)^s}+\frac{1}{(a+2)^s}+\cdots
\end{equation*}
is the Hurwitz zeta-function confined to $a\in(0,\infty)$ and $s\in(1,\infty)$ with $\zeta(s,1)=\zeta(s)$.
For those $r$ where $1\leq T(r_0,f)\leq T(r,f)<\bigl(\sqrt{2}+1\bigr)^2$, we have $\eqref{Equ12}<\eqref{Equ14}$.
Set $E''$ to be the set of such $r$.
If we include $E''$ as part of the exceptional set in which \eqref{Equ14} fails to be as an upper bound for \eqref{Equ4}, then $E\subseteq\tilde{E}:=E'\cup E''$ and $\n{\tilde{E}}$ may be larger than $\zeta(2)$ or even $2$, depending on the growth of $f(z)$.

To ensure that \eqref{Equ14} be a refinement of \eqref{Equ12} and have a smaller exceptional set, we consider a class of $f(z)$ of infinite order that grow very fast.
Let $f(z)=e^{e^{bz-c}}$ satisfy $T(r,f)=e^{d(r-r_0)}$ for suitable $b,c\in\C\setminus\{0\}$ and $d>0$.
Assume $r'_0-r_0\leq\zeta(2)-\zeta\bigl(2,\sqrt{2}+1\bigr)<1.1334549375$ and $d=\frac{2\ln(\sqrt{2}+1)}{r'_0-r_0}>1.5551982843$.
Thereby, $T(r_0,f)=1$, $T(r'_0,f)=\bigl(\sqrt{2}+1\bigr)^2$, and $\n{\tilde{E}}\leq\n{E'}+\n{E''}\leq\zeta(2)=\frac{\pi^2}{6}<2$.
For instance, select $f(z)$ to satisfy $T(r,f)=e^{1.556(r-1)}$ alongside $r'_0\leq2.134$.
For this meromorphic function $f(z)=e^{e^{bz-c}}$ and infinitely many others of faster growth such as $f(z)=e^{e^{p(z)}}$ for $n\in\N$, \eqref{Equ14} provides a sharper upper bound for \eqref{Equ4} than \eqref{Equ12} on $[r_0,\infty)$, with a refined exceptional set $\tilde{E}$ satisfying $\n{\tilde{E}}\leq\frac{\pi^2}{6}$.
\end{exam}

%%%%%%%%%%%%%%%%%%%%%%%%%%%%%%%%%%%%%%%%%%%%%%%%%%%%%%%%%%%%%%%%%%%%%%%%%%%%%%%%%%%%%%%%%%%%%%%%%%%%%%%%%%%%%%%%%%%%%%%%%%%%%%%%%%%%%%%%%%%%%%%%%%%
%%%%%%%%%%%%%%%%%%%%%%%%%%%%%%%%%%%%%%%%%%%%%%%%%%%%%%%%%%%%%%%%%%%%%%%%%%%%%%%%%%%%%%%%%%%%%%%%%%%%%%%%%%%%%%%%%%%%%%%%%%%%%%%%%%%%%%%%%%%%%%%%%%%
%%%%%%Section 2 above%%%%%%
%%%%%%Section 3 below%%%%%%
%%%%%%%%%%%%%%%%%%%%%%%%%%%%%%%%%%%%%%%%%%%%%%%%%%%%%%%%%%%%%%%%%%%%%%%%%%%%%%%%%%%%%%%%%%%%%%%%%%%%%%%%%%%%%%%%%%%%%%%%%%%%%%%%%%%%%%%%%%%%%%%%%%%
%%%%%%%%%%%%%%%%%%%%%%%%%%%%%%%%%%%%%%%%%%%%%%%%%%%%%%%%%%%%%%%%%%%%%%%%%%%%%%%%%%%%%%%%%%%%%%%%%%%%%%%%%%%%%%%%%%%%%%%%%%%%%%%%%%%%%%%%%%%%%%%%%%%
\section{\sl On a result of Fern\'{a}ndez \'{A}rias}\label{Arias} % use lowercase except for proper names
\noindent
Our final result, inspired by Theorem 2 of Fern\'{a}ndez \'{A}rias \cite{aFA1}, refines his result.
As analyzed in Section \ref{BL:gp vs. zeta}, $\log^+T(r,f)$ can represent the least possible $o(T(r,f))$; one can also consider, as Fern\'{a}ndez \'{A}rias did, the other extremal case: $T^\sigma(r,f)$ as the largest possible $o(T(r,f))$ if $0<\sigma<1$.

\begin{lem}\label{HanLiu2}
Each positive continuous increasing function $T(r)$ satisfies
\begin{equation}\label{Equ15}
T(r+\exp(-T(r)))<\exp(T(r))
\end{equation}
outside a possible exceptional set of finite linear measure $S_e:=\sum\limits_{n=0}^\infty a^{-1}_n$, where we write $a_0:=1$, $a_1:=e$, and $a_n:=e^{a_{n-1}}$ recursively for $n\in\N$.
\end{lem}

\begin{proof}
The proof is as before.
Recall $T(r)\geq0$.
Let $E_{\bf{FA}}$ be the closed subset of $(0,\infty)$ in which the inequality \eqref{Equ11} fails to hold.
Inductively, one has
\begin{equation*}
0<r_1<r'_1\leq r_2<r'_2\leq\cdots\leq r_n<r'_n\leq\cdots
\end{equation*}
such that
\begin{equation*}
\begin{split}
r'_1-r_1\leq\,&\frac{1}{\exp(T(r_1))}\leq\frac{1}{e^0}=\frac{1}{a_0}\,\,\text{and}\,\,\exp(T(r_1))\geq a_0,\\
r'_2-r_2\leq\,&\frac{1}{\exp(T(r_2))}\leq\frac{1}{\exp(T(r'_1))}\leq\frac{1}{\exp(\exp(T(r_1)))}\leq\frac{1}{e^{a_0}}=\frac{1}{a_1}\\
&\,\,\text{and}\,\,\exp(T(r_2))\geq a_1,\ldots,\\
r'_{n+1}-r_{n+1}\leq\,&\frac{1}{\exp(T(r_{n+1}))}\leq\frac{1}{\exp(T(r'_n))}\leq\frac{1}{\exp(\exp(T(r_n)))}\leq\frac{1}{e^{a_{n-1}}}=\frac{1}{a_n}\\
&\,\,\text{and}\,\,\exp(T(r_{n+1}))\geq a_n,\ldots.
\end{split}
\end{equation*}
Consequently, $E_{\bf{FA}}\subseteq\bigcup^\infty_{j=1}[r_j,r'_j]$, and accordingly $\n{E_{\bf{FA}}}\leq S_e$.
\end{proof}

\begin{thm}\label{HanLiu3}
Suppose $0<\sigma<1$.
For $T(r)=T^\sigma(r,f)$ and $R=r+\frac{1}{\exp(T^\sigma(r,f))}$, \eqref{Equ15} leads to the following upper bound for \eqref{Equ4} as
\begin{equation}\label{Equ16}
\Bigl(\frac{\sigma+1}{\sigma}\Bigr)\,T^\sigma(r,f)+\log\biggl(1+\frac{1}{r\exp(T^\sigma(r,f))}\biggr),
\end{equation}
outside a possible exceptional set of finite linear measure $S_e$, independent of $\sigma$.
\end{thm}

\begin{proof}
Consider the estimate
\begin{equation*}
T^\sigma(r+\exp(-T^\sigma(r,f)),f)<\exp(T^\sigma(r,f)),
\end{equation*}
which can be easily obtained from \eqref{Equ15}.
The remainder of the derivation for \eqref{Equ16} only involves a routine computation.
\end{proof}

The approximate value of $S_e$ lies in $(1.4338677391,1.4338677392)$.% using Wolfram Alpha.

In fact, a direct computation yields that $S_e(4):=a^{-1}_0+a^{-1}_1+a^{-1}_2+a^{-1}_3+a^{-1}_4$ belongs to $(1.43386773918,1.43386773919)$; so, $S_e>S_e(4)>1.4338677391$.
Put $b_0:=1$, $b_1:=2$, and $b_n:=2^{b_{n-1}}$ recursively with $a_n>b_n$ for $n\in\N$.
$S_e-S_e(4)<\sum\limits^\infty_{n=5}\frac{1}{b_n}<\frac{\gamma(2)}{b_5}=\frac{1}{2^{65535}}<10^{-19728}$; thus, $S_e<S_e(4)+10^{-19728}<1.43386773919+10^{-19728}<1.4338677392$.

\vskip 8pt
\noindent\fbox{{\sf Remark 4}.} Our study of the constant $S_e>0$ is motivated by Borel's work \cite[p. 368]{eB}.
A notable aspect of the preceding result is that the size of the exceptional set $E_{\mathbf{FA}}$ over $(0,\infty)$ is independent of the parameter $\sigma\in(0,1)$.
%%%%%%%%%%%%%%%%%%%%%%%%%%%%%%%%%%%%%%%%%%%%%%%%%%%%%%%%%%%%%%%%%%%%%%%%%%%%%%%%%%%%%%%%%%%%%%%%%%%%%%%%%%%%%%%%%%%%%%%%%%%%%%%%%%%%%%%%%%%%%%%%%%%
%%%%%%%%%%%%%%%%%%%%%%%%%%%%%%%%%%%%%%%%%%%%%%%%%%%%%%%%%%%%%%%%%%%%%%%%%%%%%%%%%%%%%%%%%%%%%%%%%%%%%%%%%%%%%%%%%%%%%%%%%%%%%%%%%%%%%%%%%%%%%%%%%%%
%%%%%%Section 3 above%%%%%%
%\vskip 12pt
%{\bf Acknowledgement.} The author is greatly indebted to the anonymous referees for helpful suggestions.

% -----------------------------------------------------------------------------

\bibliographystyle{amsplain}

\begin{thebibliography}{99}
\bibitem{BK} D. Benbourenane and R.J. Korhonen.
On the growth of the logarithmic derivative.
{\sl Comput. Methods Funct. Theory} 1 (2001), no. 2, 301–310.


\bibitem{eB} \'{E}. Borel.
Sur les z\'{e}ros des fonctions enti\`{e}res.
{\sl Acta Math.} 20 (1897), no. 1, 357–396.


\bibitem{CY} W. Cherry and Z. Ye.
{\bf Nevanlinna's theory of value distribution}.
Springer-Verlag, Berlin (2001).


\bibitem{aFA1} A. Fern\'{a}ndez \'{A}rias.
Some results about the size of the exceptional set in Nevanlinna's second fundamental theorem.
{\sl  Collect. Math.} 37 (1986), no. 3, 229–238.


\bibitem{aFA2} A. Fern\'{a}ndez \'{A}rias.
The error term in Nevanlinna's second fundamental theorem for holomorphic mappings on coverings.
{\sl RACSAM. Rev. R. Acad. Cienc. Exactas F\'{\i}s. Nat. Ser. A Mat.} 96 (2002), no. 1, 7–21.


\bibitem{GG} A.A. Gol'dberg and V.A. Grin\v{s}te\v{\i}n.
The logarithmic derivative of a meromorphic function.
{\sl Math. Notes} 19 (1976), no. 4, 320–323.


\bibitem{GO} A.A. Gol'dberg and I.V. Ostrovski\v{\i}.
{\bf Value distribution of meromorphic functions}.
American Mathematical Society, Providence, RI (2008).


\bibitem{HK} R.G. Halburd and R.J. Korhonen.
Nondecreasing functions, exceptional sets, and generalized Borel lemmas.
{\sl J. Aust. Math. Soc.} 88 (2010), no. 3, 353–361.


\bibitem{HL} Q. Han and J. Liu.
A short proof of the lemma of the logarithmic derivative in several complex variables.
{\sl Complex Anal. Oper. Theory} 19 (2025), no. 5, Paper No. 81.


\bibitem{wkH} W.K. Hayman.
{\bf Meromorphic functions}.
Clarendon Press, Oxford (1964).


\bibitem{aH} A. Hinkkanen.
A sharp form of Nevanlinna's second fundamental theorem.
{\sl Invent. Math.} 108 (1992), no. 3, 549–574.


\bibitem{KK} A.A. Kondratyuk and I.P. Kshanovskyy.
On the logarithmic derivative of a meromorphic function.
{\sl Math. Stud.} 21 (2004), no. 1, 98–100.


\bibitem{sL} S. Lang.
The error term in Nevanlinna theory.
{\sl Duke Math. J.} 56 (1988), no. 1, 193–218.


\bibitem{oL} O. Lehto.
On the birth of the Nevanlinna theory.
{\sl Ann. Acad. Sci. Fenn. Ser. A. I. Math.} 7 (1982), no. 1, 5–23.


\bibitem{elL} E.L. Lindel\"{o}f.
M\'{e}moire sur la th\'{e}orie des fonctions enti\`{e}res de genre fini.
{\sl Acta Soc. Sci. Fenn.} 31 (1902), no. 1, 1–79.


\bibitem{rN} R. Nevanlinna.
Remarques sur les fonctions monotones.
{\sl Bull. Sci. Math. (2)} 55 (1931), no. 1, 140–144.


\bibitem{cfO} C.F. Osgood.
Sometimes effective Thue–Siegel–Roth–Schmidt–Nevanlinna bounds, or better.
{\sl J. Number Theory} 21 (1985), no. 3, 347–389.


\bibitem{ecT} E.C. Titchmarsh.
{\bf The theory of the Riemann zeta-function}.
Clarendon Press, Oxford (1951).


\bibitem{pV} P. Vojta.
{\bf Diophantine approximations and value distribution theory}.
Springer-Verlag, Berlin (1987).


\bibitem{p-mW} P.-M. Wong.
On the second main theorem of Nevanlinna theory.
{\sl Amer. J. Math.} 111 (1989), no. 4, 549–583.


\bibitem{zY} Z. Ye.
On Nevanlinna's error terms.
{\sl Duke Math. J.} 64 (1991), no. 2, 243–260.
\end{thebibliography}

\end{document}